\documentclass[11pt,leqno]{amsart}

\usepackage{amsmath,amssymb,amsthm}
\usepackage[a4paper,margin=2.5cm]{geometry}
\usepackage[hidelinks]{hyperref}

\newtheorem{theorem}{Theorem}[section]
\newtheorem*{theorem*}{Theorem}
\newtheorem{proposition}[theorem]{Proposition}
\newtheorem{corollary}[theorem]{Corollary}
\newtheorem*{questionA}{Question A}
\newtheorem*{questionB}{Question B}
\theoremstyle{definition}
\newtheorem{definition}[theorem]{Definition}
\newtheorem{example}[theorem]{Example}
\theoremstyle{remark}
\newtheorem{remark}[theorem]{Remark}

\DeclareMathOperator{\Id}{Id}
\newcommand{\R}{\mathbb R}

\title{Characterizing the Daugavet property by squares of rank-one operators}
\author{Johann Langemets}
\address{Institute of Mathematics and Statistics, University of Tartu,
Narva mnt 18, 51009 Tartu, Estonia}
\email{johann.langemets@ut.ee}
\urladdr{https://www.johannlangemets.com/}
\thanks{This work was supported by the Estonian Research Council grant (PRG2545) and partially supported by the grant PID2025-167660NB-I00 funded by MICIU/AEI/10.13039/501100011033 and ERDF/EU}
\subjclass[2020]{Primary 46B20}
\keywords{Daugavet property, Daugavet point,
$\Delta$-point, $\nabla$-point, extremely non-complex Banach space}
\date{}

\begin{document}

\begin{abstract}
We prove that, for real Banach spaces of dimension greater than one, each of the identities $\|\Id+T^2\|=1+\|T^2\|$ and $\|\Id-T^2\|=1+\|T^2\|$, required for every rank-one operator $T$, characterizes the Daugavet property, thereby answering a question posed by Kadets, Mart\'in, and Mer\'i (2007). Consequently, every extremely non-complex Banach space of dimension greater than one has the Daugavet property, answering a question of Mart\'in and Mer\'i (2011). We also compare pointwise versions of the square Daugavet properties with Daugavet points, $\Delta$-points, and $\nabla$-points.
\end{abstract}

\maketitle

\section{Introduction}

Throughout the paper, Banach spaces are nonzero and real. A Banach
space $X$ has the \emph{Daugavet property} if
\[
  \|\Id+T\|=1+\|T\|
\]
for every rank-one operator $T\colon X\to X$. The equation goes back
to Daugavet~\cite{Daugavet1963}, who proved it for compact operators
on $C[0,1]$. Kadets, Shvidkoy, Sirotkin, and Werner~\cite{KSSW2000}
studied the property systematically and showed that the rank-one
condition implies the same identity for every weakly compact
operator. Classical examples include $C(K)$ for perfect compact
Hausdorff spaces $K$, and $L_1(\mu)$ and $L_\infty(\mu)$ for
non-atomic measures $\mu$. A space with the Daugavet property
contains an isomorphic copy of $\ell_1$, does not embed into a
space with an unconditional basis, and every slice of its unit ball has
diameter two; see~\cite{KSSW2000}. For a broader account, see the forthcoming
monograph~\cite{KMRZW}.

For $\sigma\in\{-1,1\}$, following Oikhberg~\cite{Oikhberg2007}, we say that $X$ has the
\emph{$\sigma$-square Daugavet property} if
\begin{equation}\label{eq:signed-square}
  \|\Id+\sigma T^2\|=1+\|T^2\|
\end{equation}
for every rank-one operator $T\colon X\to X$. The cases $\sigma=1$
and $\sigma=-1$ are called the \emph{positive} and \emph{negative
square Daugavet properties}, respectively. These identities were first
considered by Kadets, Mart\'in, and Mer\'i~\cite{KMM2007} in their
study of norm equalities for functions of rank-one operators.
More generally, they studied equations of the form
\begin{equation*}
  \|\Id+\psi(T)\|=\varphi(\|\psi(T)\|),
\end{equation*}
where $\psi$ is an entire function on the scalar field and
$\varphi\colon\R_{\ge0}\to\R_{\ge0}$ is continuous. They obtained
rigidity results under suitable hypotheses on these functions, but
left open whether the two signed square identities, corresponding
to $\psi(z)=\sigma z^2$ and $\varphi(t)=1+t$, force the Daugavet
property. With the necessary one-dimensional exception removed,
their question can be stated as follows.

\begin{questionA}[{\cite{KMM2007}}]
Let $X$ be a real Banach space with $\dim X>1$. Does either the
positive or the negative square Daugavet property imply the
Daugavet property?
\end{questionA}

Each signed square Daugavet property implies the \emph{alternative Daugavet property}, introduced by Mart\'in and Oikhberg~\cite{MO2004}, which requires
\[
  \max\{\|\Id+T\|,\|\Id-T\|\}=1+\|T\|
\]
for every rank-one operator $T$. Indeed, writing $T=f\otimes x$, one of $T$ and $-T$ satisfies the sign condition in Proposition~\ref{prop:kmm-signed} below, and hence the Daugavet equation. The alternative Daugavet property is strictly weaker than the Daugavet property~\cite{MO2004}; see~\cite{LLMPRZ2026} for a recent strengthening and further references. Question~A asks whether either signed square hypothesis yields the stronger conclusion that $X$ has the Daugavet property.

Earlier work established geometric consequences of these
identities. For spaces of dimension greater than one,
Oikhberg~\cite[Theorem~1]{Oikhberg2007} showed that each signed square
Daugavet property excludes strongly exposed points of the unit
ball, and that every slice of the unit ball has diameter two in the case of the negative square Daugavet property. Mart\'in and Mer\'i~\cite[Theorem~2]{MM2011} subsequently
obtained a uniform positive lower bound for the diameters of
slices of the unit ball under the positive square Daugavet property.

If the positive identity in~\eqref{eq:signed-square} holds for
\emph{every bounded linear operator} $T\colon X\to X$, the space $X$ is
called \emph{extremely non-complex}. Whether such spaces other
than $\R$ exist was asked in~\cite{KMM2007}. Koszmider, Mart\'in,
and Mer\'i~\cite{KMM2009} constructed infinite-dimensional
examples, and subsequently studied isometries on such
spaces~\cite{KMM2011}. Mart\'in and Mer\'i~\cite{MM2011} later
asked whether every extremely non-complex space has the Daugavet
property.

\begin{questionB}[{\cite{MM2011}, \cite[Question~12.3]{KMRZW}}]
Does every extremely non-complex space of dimension greater than one have the Daugavet property?
\end{questionB}

We answer both questions affirmatively. The main result is the following equivalence.

\begin{theorem*}
Let $X$ be a real Banach space with $\dim X>1$.
The following are equivalent:
\begin{itemize}
\item[(i)] $X$ has the Daugavet property;
\item[(ii)] $X$ has the positive square Daugavet property;
\item[(iii)] $X$ has the negative square Daugavet property.
\end{itemize}
In particular, every extremely non-complex Banach space of
dimension greater than one has the Daugavet property.
\end{theorem*}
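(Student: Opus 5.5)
The implication (i)$\Rightarrow$(ii),(iii) is immediate. If $T$ has rank one, then $T^2$ has rank at most one. Applying the Daugavet equation to the rank-one operator $\sigma T^2$ (or noting $T^2=0$) gives $\|\Id+\sigma T^2\|=1+\|T^2\|$. The final sentence also follows directly: an extremely non-complex space satisfies the positive identity for all operators, in particular for rank-one ones, so (ii)$\Rightarrow$(i) applies. The real work is therefore (ii)$\Rightarrow$(i) and (iii)$\Rightarrow$(i), which I would treat simultaneously for a fixed $\sigma\in\{-1,1\}$.

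\emph{Step 1: reduction to a sign-restricted Daugavet equation.} Write $T=f\otimes x$ with $\|f\|=\|x\|=1$, so that $T^2=f(x)\,T$. The $\sigma$-square identity for $\lambda T$, $\lambda>0$, reads $\|\Id+\sigma\lambda^2 f(x)\,f\otimes x\|=1+\lambda^2|f(x)|$. If $\sigma f(x)>0$, letting $\lambda$ range over $(0,\infty)$ gives the Daugavet equation $\|\Id+s\,f\otimes x\|=1+s$ for every $s>0$. This is what I take Proposition~\ref{prop:kmm-signed} to record. I then translate it into the usual slice language: for $x\in S_X$, $f\in S_{X^*}$ with $\sigma f(x)>0$ and every $\varepsilon>0$, there is $y\in S_X$ with $f(y)>1-\varepsilon$ and $\|x+y\|>2-\varepsilon$. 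Equivalently, there is $h\in S_{X^*}$ with $h(x),h(y)>1-\varepsilon$, whence $\|x+ty\|\approx 1+t$ for all $t>0$. By the standard characterization, the Daugavet property is exactly this statement without the sign restriction. The property is also stable under small perturbations of $x$ (with $\varepsilon$ adjusted), so it extends at once to pairs with $f(x)=0$, approximating $x$ by $x+\delta z$ with $\sigma f(z)>0$. It remains to handle pairs with $\sigma f(x)<0$.

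\emph{Step 2: the opposite sign (main obstacle).} Fix $x,f$ with $\sigma f(x)<0$, and here $\dim X>1$ must be used. My first attempt is a two-step chaining argument. Choose $u\in S_X$ with $f(u)>1-\varepsilon$ and consider $x_t=x+tu$ for $t>|f(x)|$, so that $\sigma f(x_t)$ has the good sign when $\sigma=1$. For $\sigma=-1$, I would use instead $x-tu$, or replace $f$ by $-f$ and work with the slice $S(-f,\cdot)$. The good-sign case, applied to the unnormalized vector $x_t$, yields $y$ in the slice with $\|x_t+y\|\approx\|x_t\|+1$, and hence a functional $h$ almost norming both $x_t$ and $y$. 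To extract $h(x)\approx1$ I need $\|x+tu\|\approx1+t$. So the first link of the chain must be a $u$ in the $f$-slice that is almost aligned with $x$.

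To obtain this $u$ I would pass to the dual formulation, $\|\Id+f\otimes x\|=\|\Id^*+x\otimes f\|$, where the roles of $x$ and $f$ are swapped. I would use it to first find a functional $g$ with $g(x)\approx1$ and $\|f+g\|\approx2$, choosing an intermediate vector or functional of the good sign with respect to both (possible since $\dim X>1$ leaves room off the line through $x$). Then I would apply the good-sign slice property to the combined functional $(f+g)/\|f+g\|$, whose slices force both $f(y)\approx1$ and $g(y)\approx1$. If the chaining does not close directly, I would supplement it with Oikhberg's facts that there are no strongly exposed points and, for $\sigma=-1$, that slices have diameter two. These give points in any slice far from $-x$, to serve as intermediate vectors.

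I expect this sign-switching step to be the heart of the proof and the only place where $\dim X>1$ enters, in view of the counterexample $X=\R$.
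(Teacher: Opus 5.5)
Your easy implications and Step~1 are fine; Step~1 is essentially Proposition~\ref{prop:kmm-signed}. Step~2, however, carries the entire content of (ii)$\Rightarrow$(i) and (iii)$\Rightarrow$(i), and it is not yet an argument: both mechanisms you sketch are circular.

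\emph{The first link.} A vector $u\in S_X$ with $f(u)>1-\varepsilon$ and $\|x+tu\|\approx1+t$ is precisely a witness that $(x,f)$ is good, which is the conclusion itself.

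\emph{The dual detour.} Suppose $g\in S_{X^*}$ has $g(x)\approx1$ and $\|f+g\|\approx2$. Then some $y\in S_X$ has $f(y),g(y)\approx1$, hence $\|x+y\|\ge g(x+y)\approx2$. Conversely, a functional norming $x+y$ produces such a $g$. So finding this $g$ is again equivalent to $(x,f)$ being good, with the same sign of $f(x)$, and your hypothesis says nothing about it.

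The ``intermediate vector of good sign'' and the appeal to Oikhberg's theorems are never turned into an actual step. Your instinct to apply the good-sign property to a combined functional is right. What is missing is a non-circular source for the auxiliary $g$. You also need the observation that $g$ only has to be moderately positive at $x$ (about $\tfrac12$), not $\approx1$.

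\emph{How the paper handles (ii).} Fix $f$ and let $A_f=\{z\in S_X:(z,f)\text{ is good}\}$; it contains every $z$ with $f(z)>0$. Suppose $z\in A_f$ and $\|x-z\|<\tfrac12$. Take a witness $y$ for $(z,f)$ and $g\in S_{X^*}$ norming $z+y$. Then $g(y)\approx1$, and up to small errors $g(x)\ge g(z)-\|x-z\|>\tfrac12$. So $F=f+2g$ has $F(x)>0$ and $\|F\|\approx3$. The good-sign property for $(x,F/\|F\|)$ gives $w$ with $\|x+w\|\approx2$ and $F(w)\approx3$, which forces $f(w)\approx1$. This propagation is only local. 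It is globalized by showing that $A_f$ is clopen in $S_X$ and using connectedness of the sphere, and this is exactly where $\dim X>1$ enters.

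\emph{How the paper handles (iii).} Your aside about replacing $f$ by $-f$ is the key here. Contrary to your expectation, no dimension assumption is needed. If $f(x)>0$, apply the hypothesis to $(x,-f)$ and negate the witness, giving $y$ with $f(y)\approx1$ and $\|y-x\|\approx2$. A functional $g$ norming $y-x$ has $g(x)\approx-1$. Then $F=f+2g$ has $F(x)<0$, which is the good sign, and the same slice argument finishes the proof.
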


We prove the main theorem by treating the positive
and negative square Daugavet properties separately (see Theorem~\ref{thm:positive-global} and Corollary~\ref{cor:global}). In Section~\ref{sec:comparison}, we compare positive square points with the
$\Delta$- and $\nabla$-points studied in~\cite{AHLP2020,HLPV2024}
and give examples establishing the strictness and incomparability
of the remaining conditions.

\section{Main results}

We write $B_X$, $S_X$, and $X^*$ for the closed unit ball, the unit
sphere, and the dual of a Banach space $X$, respectively. For
$f\in X^*$ and $x\in X$, the operator $f\otimes x$ is defined by
\[
  (f\otimes x)(z)=f(z)x\qquad(z\in X).
\]

We call a pair $(x,f)\in S_X\times S_{X^*}$ \emph{good} if, for
every $\varepsilon>0$, there is $y\in S_X$ such that
\begin{equation}\tag{$G$}\label{eq:good}
  f(y)>1-\varepsilon
  \qquad\text{and}\qquad
  \|x+y\|>2-\varepsilon.
\end{equation}
The Daugavet property is equivalent to requiring that every pair $(x,f)\in S_X\times S_{X^*}$ is good~\cite[Lemma~2.2]{KSSW2000}. For the signed square Daugavet
properties, we use the following characterization of Kadets,
Mart\'in, and Mer\'i~\cite{KMM2007}.

\begin{proposition}[{\cite[Proposition~4.9]{KMM2007}}]\label{prop:kmm-signed}
Let $X$ be a Banach space and let $\sigma\in\{-1,1\}$. The following assertions are equivalent:
\begin{itemize}
\item[(i)] $X$ has the $\sigma$-square Daugavet property;
\item[(ii)] $\|\Id+f\otimes x\|=1+\|f\otimes x\|$ for all
$f\in X^*$ and $x\in X$ with $\sigma f(x)\ge0$;
\item[(iii)] the pair $(x,f)$ is good for every $x\in S_X$ and
$f\in S_{X^*}$ with $\sigma f(x)\ge0$.
\end{itemize}
\end{proposition}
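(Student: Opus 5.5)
The plan is to prove (i)$\Leftrightarrow$(ii) by an algebraic identity for squares of rank-one operators, and (ii)$\Leftrightarrow$(iii) by the standard pointwise argument behind \cite[Lemma~2.2]{KSSW2000}, together with a scaling reduction.

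\emph{Step 1: (ii)$\Rightarrow$(i).} For $T=f\otimes x$ we have $T^2=f(x)\,f\otimes x$. Hence $\sigma T^2=g\otimes x$ with $g=\sigma f(x) f$. This $g$ satisfies $\sigma g(x)=f(x)^2\ge 0$. So (ii) applied to $(g,x)$ gives $\|\Id+\sigma T^2\|=1+\|\sigma T^2\|=1+\|T^2\|$.

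\emph{Step 2: (i)$\Rightarrow$(ii).} Let $\sigma f(x)\ge0$. If $f=0$ or $x=0$ there is nothing to prove.

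\begin{itemize}
\item If $\sigma f(x)>0$, put $S=\lambda f\otimes x$ with $\lambda^2=1/(\sigma f(x))$. Then $\sigma S^2=\sigma\lambda^2 f(x)\,f\otimes x=f\otimes x$, and (i) for $S$ is exactly the desired identity.
\item If $f(x)=0$, pick $u$ with $\sigma f(u)>0$. Then $x_t=x+tu$ satisfies $\sigma f(x_t)>0$ for $t>0$, so the previous case applies to $f\otimes x_t$. Letting $t\to0$, both sides of the identity are norm-continuous in $t$, so it passes to $f\otimes x$.
\end{itemize}

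\emph{Step 3: reduction to norm-one pairs.} Write $f\otimes x=c\,(f_0\otimes x_0)$ with $c=\|f\|\|x\|$ and $(x_0,f_0)\in S_X\times S_{X^*}$; the sign condition is preserved. It remains to show that, for a rank-one $T$, the identity $\|\Id+tT\|=1+t\|T\|$ holding at one $t_0>0$ forces it for all $t>0$.

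\begin{itemize}
\item For $t<t_0$, the triangle inequality gives $\|\Id+tT\|\ge\|\Id+t_0T\|-(t_0-t)\|T\|=1+t\|T\|$.
\item For $t>t_0$, use convexity of $\varphi(t)=\|\Id+tT\|$: from $\varphi(t_0)\le(1-s)\varphi(0)+s\varphi(t)$ with $s=t_0/t$ we get $\varphi(t)\ge1+t\|T\|$.
\end{itemize}

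In both cases the reverse inequality is trivial. So (ii) is equivalent to $\|\Id+f\otimes x\|=2$ for all $(x,f)\in S_X\times S_{X^*}$ with $\sigma f(x)\ge0$.

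\emph{Step 4: for norm-one $(x,f)$, $\|\Id+f\otimes x\|=2$ if and only if $(x,f)$ is good.} This is the main computation, and the only step needing care.

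\begin{itemize}
\item Suppose $(x,f)$ is good, and take $y$ as in \eqref{eq:good}. Then $\|y+f(y)x\|\ge\|x+y\|-(1-f(y))>2-2\varepsilon$.
\item Conversely, suppose the norm is $2$, and take $y\in S_X$ with $\|y+f(y)x\|>2-\varepsilon$. This forces $|f(y)|>1-\varepsilon$. Replacing $y$ by $-y$, which leaves the norm unchanged, we may assume $f(y)>1-\varepsilon$. Then $\|x+y\|\ge\|y+f(y)x\|-(1-f(y))>2-2\varepsilon$.
\end{itemize}

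Combining Steps 3 and 4 gives (ii)$\Leftrightarrow$(iii), which completes the proof.
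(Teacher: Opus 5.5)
Your proof is correct and complete. The identity $T^2=f(x)\,T$ for $T=f\otimes x$ gives (i)$\Leftrightarrow$(ii), and your continuity argument properly covers the boundary case $f(x)=0$. The triangle-inequality and convexity argument correctly shows that $\|\Id+tT\|=1+t\|T\|$ at one $t_0>0$ forces it for all $t>0$. Step~4 is a correct pointwise version of \cite[Lemma~2.2]{KSSW2000}.

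The paper itself gives no proof of this proposition; it quotes it from \cite{KMM2007}, so there is no in-paper argument to compare with. Your combination of these three standard ingredients is the natural self-contained proof.
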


\subsection{The positive square property}

We begin by answering Question~A for the positive square Daugavet property.

\begin{theorem}\label{thm:positive-global}
Let $X$ be a Banach space with $\dim X>1$. Then $X$ has the
positive square Daugavet property if and only if it has the
Daugavet property.
\end{theorem}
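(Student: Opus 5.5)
The implication from the Daugavet property to the positive square Daugavet property is immediate. If $T$ is rank-one, then $T^2$ is either $0$, in which case \eqref{eq:signed-square} is trivial, or rank-one, in which case the Daugavet equation applies to it. The work is in the converse.

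For the converse, by Proposition~\ref{prop:kmm-signed} and \cite[Lemma~2.2]{KSSW2000}, it suffices to show the following. If every pair $(x,f)\in S_X\times S_{X^*}$ with $f(x)\ge0$ is good, then so is every pair with $f(x)<0$. Fix such a pair $(x,f)$ and $\varepsilon>0$.

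I would first record a dual form of goodness. Since $(x,f)$ is good iff $\|\Id+f\otimes x\|=2$, and the adjoint of $f\otimes x$ is $x\otimes f$ acting on $X^*$, the pair $(x,f)$ is good iff for every $\varepsilon>0$ there is $\varphi\in S_{X^*}$ with $\varphi(x)>1-\varepsilon$ and $\|f+\varphi\|>2-\varepsilon$. This gives two ways to produce the witness, one in $X$ and one in $X^*$. The idea is to chain good pairs with nonnegative pairing so as to pass from $x$ to $f$ through an intermediate vector. The hypothesis $\dim X>1$ is used exactly here: it provides a vector $u\in S_X$ with $f(u)>0$ that is not parallel to $x$. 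It also provides a vector $z$ on the segment between $x$ and $u$, normalized to $S_X$, with $f(z)=0$, and a functional $g\in S_{X^*}$ with $g(x)=0$. On $\R$ this fails, as it must.

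The chain I would try is the following.
\begin{enumerate}
\item Apply goodness to $(z,f)$, which is allowed since $f(z)=0$. This gives $y\in S_X$ with $f(y)>1-\delta$ and $\|z+y\|>2-\delta$.
\item Take $\varphi\in S_{X^*}$ norming $z+y$, so that $\varphi(z)\approx1$ and $\varphi(y)\approx1$. Since $z$ is a positive combination of $x$ and $u$, this forces information on $\varphi(x)$ and $\varphi(u)$.
\item Apply goodness again to whichever of $(x,\varphi)$, $(-x,\varphi)$, $(x,\pm g)$ has nonnegative pairing. Repeatedly replace $z$ by points closer to $x$ along the segment, keeping at each stage $f$ near $1$ on the current witness and the norm of the sum near $2$.
\end{enumerate}
A standard way to close such an iteration is the slice reformulation. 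Goodness of all pairs $(v,f)$ with $f(v)\ge0$ means that every slice $S(f,\delta)$ contains points $y$ with $\|v+y\|$ close to $2$. One then wants to transport this from $v=z$ to $v=x$ using $\|x+y\|\ge\|z+y\|-\|x-z\|$. Getting $z$ close to $x$ would require $f(x)\approx0$, so a direct perturbation only handles small $|f(x)|$.

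The main obstacle is therefore the case where $f(x)$ is strongly negative, e.g.\ near $-1$. Here no single perturbation works, and the sign constraint must be circumvented through an auxiliary pair. What I would try first is to use $g$ with $g(x)=0$ and the functionals $f_t=(f+tg)/\|f+tg\|$. I would find $y$ that is simultaneously almost in the slices of $g$ and of $f$ and almost aligned with $x$. This amounts to combining the dual witness $\varphi$ for $(x,g)$ with a primal witness for $(\cdot,f)$ evaluated at a vector on which $f$ is nonnegative. The delicate point, and where I expect most of the effort to go, is the bookkeeping of these two witnesses, so that the final $y$ satisfies $f(y)>1-\varepsilon$ and $\|x+y\|>2-\varepsilon$ simultaneously.
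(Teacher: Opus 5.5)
\textbf{There is a genuine gap.} You never supply the step that transfers goodness from a pair with $f\ge0$ to a pair with $f(x)<0$, and the mechanisms you suggest for it do not work.

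\begin{itemize}
\item Applying the hypothesis to $(x,\varphi)$ gives $w$ with $\varphi(w)\approx1$ and $\|x+w\|\approx2$, but says nothing about $f(w)$. Knowing $\|f+\varphi\|\approx2$ does not put the slice of $\varphi$ inside the slice of $f$.
\item The functionals $f_t=(f+tg)/\|f+tg\|$ with $g(x)=0$ satisfy $f_t(x)=f(x)/\|f+tg\|<0$. So the hypothesis cannot be applied to $(x,f_t)$ at all, and $g$ carries no information tying it to the slice of $f$.
\item The estimate $\|x+y\|\ge\|z+y\|-\|x-z\|$ requires $\|x-z\|<\varepsilon$. Since $\|x-z\|\ge f(z)-f(x)\ge|f(x)|$ whenever $f(z)\ge0$, this only shows that $\{x\in S_X:(x,f)\text{ good}\}$ is closed. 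It proves goodness for no $x$ with $f(x)<0$, not even for small $|f(x)|$.
\end{itemize}

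What is needed is a propagation step whose radius does not depend on $\varepsilon$. Without it, no chain along your segment can be closed, so this is the heart of the proof rather than bookkeeping.

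\textbf{The missing idea} is to combine $f$ with the norming functional from your step~2 before invoking the hypothesis.
\begin{itemize}
\item If $(z,f)$ is good, take $y\in S_X$ with $f(y)>1-\eta$ and $\|z+y\|>2-\eta$, take $\varphi\in S_{X^*}$ with $\varphi(z+y)=\|z+y\|$, and set $F=f+2\varphi$.
\item Then $\|F\|\ge F(y)>3(1-\eta)$.
\item For $x\in S_X$ with $\rho=\|x-z\|<\frac12$ we have $F(x)\ge-1+2(1-\eta-\rho)>0$ once $\eta$ is small. Any weight $t>1$ in $f+t\varphi$ works; $t=1$ does not guarantee positivity.
\item So the hypothesis applies to $(x,F/\|F\|)$. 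It yields $w$ with $\|x+w\|>2-\eta$ and $F(w)>3(1-\eta)^2$.
\item Since $\varphi(w)\le1$, this forces $f(w)>1-6\eta$.
\end{itemize}
Hence goodness of $(\cdot,f)$ passes from $z$ to the whole ball of radius $\frac12$ around it, whatever the sign of $f(x)$.

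This is the paper's key step. The paper then observes that $\{x\in S_X:(x,f)\text{ good}\}$ is nonempty, open and closed, hence equal to $S_X$ by connectedness of the sphere when $\dim X>1$. Your normalized segment from $u$ to $x$, covered by finitely many steps of length less than $\frac12$, would serve equally well in place of connectedness.

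Note that the functional $\varphi$ norming $z+y$ is exactly a dual witness in your reformulation. You had the right ingredients; what was missing was the combination $f+2\varphi$.
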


\begin{proof}
Clearly, the Daugavet property implies the positive square Daugavet property. For the converse, assume that $X$ has the positive square Daugavet
property. We will show that every pair
$(x,f)\in S_X\times S_{X^*}$ is good.

Fix $f\in S_{X^*}$, and put
\[
  A_f=\{z\in S_X:(z,f)\text{ is good}\}.
\]
By Proposition~\ref{prop:kmm-signed}, this set contains every
$z\in S_X$ with $f(z)>0$, so it is nonempty. We claim that
\begin{equation}\tag{$\bigstar$}\label{eq:propagation}
  z\in A_f,\quad x\in S_X,\quad \|x-z\|<\tfrac12
  \quad\Longrightarrow\quad x\in A_f.
\end{equation}

To prove~\eqref{eq:propagation}, let $z\in A_f$ and $x\in S_X$ with
$\|x-z\|<\tfrac12$. Write $\rho=\|x-z\|$ and $\delta=1-2\rho>0$. Fix
$0<\varepsilon<1$ and set
$\eta=\tfrac1{12}\min\{\varepsilon,\delta\}$.
Since $z\in A_f$, there is $y\in S_X$ with
\[
  f(y)>1-\eta
  \qquad \text{and} \qquad
  \|z+y\|>2-\eta.
\]
Choose $g\in S_{X^*}$ with $g(z+y)=\|z+y\|$. As $g(z),g(y)\le1$,
we have
\[
  g(z)>1-\eta
   \qquad \text{and} \qquad
  g(y)>1-\eta.
\]

Set $F=f+2g$. Then
\[
  \|F\|\ge F(y)=f(y)+2g(y)>3(1-\eta).
\]
On the other hand, $f(x)\ge-1$ and
$g(x)\ge g(z)-\|x-z\|>1-\eta-\rho$, so
\[
  F(x)>-1+2(1-\eta-\rho)
       =\delta-2\eta\ge\tfrac56\delta>0.
\]
Thus $h=F/\|F\|\in S_{X^*}$ satisfies $h(x)>0$.
By Proposition~\ref{prop:kmm-signed}, the pair $(x,h)$ is good.
There is therefore $w\in S_X$ with
\[
  h(w)>1-\eta
   \qquad \text{and} \qquad
  \|x+w\|>2-\eta.
\]
We obtain $F(w)>(1-\eta)\|F\|>3(1-\eta)^2$.
Since $g(w)\le1$,
\begin{align*}
  f(w)&=F(w)-2g(w)>3(1-\eta)^2-2\\
      &\ge1-6\eta\ge1-\tfrac\varepsilon2>1-\varepsilon.
\end{align*}
Together with $\|x+w\|>2-\eta>2-\varepsilon$, this proves that
$(x,f)$ is good. Hence~\eqref{eq:propagation} holds.

Property~\eqref{eq:propagation} shows that $A_f$ is relatively open
in $S_X$. Its complement is relatively open as well: if
$x\notin A_f$, no point $z\in A_f$ can satisfy $\|x-z\|<1/2$,
since otherwise~\eqref{eq:propagation} would imply $x\in A_f$. Hence $A_f$ is a clopen subset of $S_X$.
The unit sphere of a real normed space of dimension greater than
one is connected. Since $A_f$ is nonempty, it follows that
$A_f=S_X$. As $f$ was arbitrary, every normalized pair is good,
and $X$ has the Daugavet property.
\end{proof}

\begin{remark}\label{rem:real-line}
The assumption $\dim X>1$ is necessary. On the real line, every
operator has the form $T=t\Id$, and $|1+t^2|=1+t^2$. Thus $\R$ is an extremely non-complex space, and hence it
has the positive square Daugavet property. It does not have the
Daugavet property, as witnessed by the rank-one operator $T=-\Id$.
\end{remark}

Since rank-one operators are a particular case of bounded linear operators, Theorem~\ref{thm:positive-global} also answers Question~B
raised in~\cite{MM2011}.

\begin{corollary}\label{cor:extremely-non-complex}
Every extremely non-complex Banach space of dimension greater
than one has the Daugavet property.
\end{corollary}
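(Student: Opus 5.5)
The corollary follows by chaining two facts: an extremely non-complex space has the positive square Daugavet property, and Theorem~\ref{thm:positive-global} upgrades that property to the full Daugavet property when $\dim X>1$.

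Let $X$ be extremely non-complex with $\dim X>1$. By definition,
\[
  \|\Id+T^2\|=1+\|T^2\|
\]
holds for every bounded linear operator $T\colon X\to X$. In particular it holds for every rank-one operator $T=f\otimes x$. Hence $X$ has the positive square Daugavet property in the sense of~\eqref{eq:signed-square} with $\sigma=1$.

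Since $\dim X>1$, Theorem~\ref{thm:positive-global} applies directly and shows that $X$ has the Daugavet property. All the real work, namely the clopen-set propagation argument on the connected unit sphere, is already contained in that theorem. There is no genuine obstacle here. The only point worth stating explicitly is that the hypothesis $\dim X>1$ is exactly what is needed to invoke Theorem~\ref{thm:positive-global}. Remark~\ref{rem:real-line} shows this hypothesis cannot be dropped, since $\R$ is extremely non-complex but lacks the Daugavet property.
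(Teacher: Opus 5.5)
Your proposal is correct and matches the paper's argument exactly: the paper notes that rank-one operators are a special case of bounded linear operators, so an extremely non-complex space has the positive square Daugavet property, and Theorem~\ref{thm:positive-global} then gives the Daugavet property when $\dim X>1$. Your remark that Remark~\ref{rem:real-line} shows the dimension hypothesis cannot be dropped is also accurate.
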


\subsection{The negative square property}

In~\cite{AHLP2020}, Abrahamsen, Haller, Lima, and Pirk introduced the notion of a Daugavet point, which is a pointwise version of the Daugavet property. An element $x\in S_X$ of a Banach space $X$ is a \emph{Daugavet point} if $(x,f)$ is
good for every $f\in S_{X^*}$. We now introduce the corresponding pointwise versions of the signed square properties, of which the negative case will turn out to be equivalent to the Daugavet point condition.

\begin{definition}\label{def:square-points}
Let $X$ be a Banach space and let $x\in S_X$. We say that $x$ is
\begin{itemize}
\item a \emph{positive square point} if $(x,f)$ is good for every
$f\in S_{X^*}$ with $f(x)\ge0$;
\item a \emph{negative square point} if $(x,f)$ is good for every
$f\in S_{X^*}$ with $f(x)\le0$.
\end{itemize}
\end{definition}

Proposition~\ref{prop:kmm-signed} says that $X$ has the positive
(respectively, negative) square Daugavet property if and only if
every point of $S_X$ is a positive (respectively, negative) square
point. Every Daugavet point satisfies both pointwise conditions.
The next result proves the converse for negative square points.
For positive square points, the converse fails, as
Example~\ref{ex:linfty} below shows.

\begin{theorem}\label{thm:negative-point} Let $X$ be a Banach space. Then an element $x\in S_X$ is a negative square point if and only if
it is a Daugavet point.
\end{theorem}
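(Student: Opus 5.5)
The direction ``Daugavet point $\Rightarrow$ negative square point'' is immediate from the definitions. For the converse I would fix a negative square point $x$ and work with the set
\[
  B_x=\{f\in S_{X^*}:(x,f)\text{ is good}\}.
\]
This set contains every $f$ with $f(x)\le0$, so the goal is to show $B_x=S_{X^*}$. The plan mirrors the proof of Theorem~\ref{thm:positive-global}, with the roles of $x$ and $f$ interchanged: I would show that $B_x$ is clopen in $S_{X^*}$ and then use connectedness of the dual sphere.

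First the degenerate case. If $\dim X=1$, take $x=1$ and $f=-\Id$. Then $f(x)<0$, but the only $y\in S_X$ with $f(y)$ close to $1$ is $y=-x$, and $\|x+y\|=0$. So $(x,f)$ is not good and no point of $S_X$ is a negative square point. Since $\R$ has no Daugavet points either, the equivalence holds vacuously. From now on assume $\dim X>1$, so that $S_{X^*}$ is connected.

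Closedness of $B_x$ should be routine. If $\|f_n-f\|\to0$ and each $(x,f_n)$ is good, then a witness $y$ for $(x,f_n)$ with tolerance $\varepsilon/2$ is also a witness for $(x,f)$ with tolerance $\varepsilon$ once $\|f_n-f\|<\varepsilon/2$. Hence $B_x$ is closed.

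The real work is openness, or more precisely a quantitative propagation statement in the spirit of~\eqref{eq:propagation}. The statement I would aim for is: if $f_0\in B_x$ and $\|f-f_0\|<c$ for a fixed constant $c$, then $f\in B_x$. Since this propagation radius is uniform, the same argument as in Theorem~\ref{thm:positive-global} also shows that the complement of $B_x$ is open.

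To prove propagation, take a witness $y$ for $(x,f_0)$ with a small tolerance $\eta$, and a functional $g$ norming $x+y$, so that $g(x)$ and $g(y)$ are both close to $1$. Then form an auxiliary functional $F=f+\lambda g+\mu g_x$, where $g_x$ norms $x$. The coefficients $\lambda,\mu$ should be chosen so that:
\begin{itemize}
\item[(a)] $F(x)\le0$, so that $(x,F/\|F\|)$ is good by hypothesis;
\item[(b)] $\|F\|$ is witnessed almost entirely by $f$, for instance through $y$ or a nearby point;
\item[(c)] a witness $w$ for $(x,F/\|F\|)$ satisfies $f(w)>1-\varepsilon$, as in the last computation in the proof of Theorem~\ref{thm:positive-global}.
\end{itemize}

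I expect the main obstacle to be reconciling (a) with (b) and (c). Making $F(x)\le0$ requires subtracting something positive at $x$, such as $g$ or $g_x$. That subtraction lowers $F(y)$, because $g(y)\approx1$, and it also leaves the value $g(w)$ at the new witness uncontrolled, so $f(w)=F(w)-\lambda g(w)$ cannot be bounded from below as it was in the positive case.

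If the perturbative approach fails, my fallback is the convex-hull description of Daugavet points. Let $\Delta_\varepsilon=\{y\in B_X:\|x+y\|>2-\varepsilon\}$. I would try to show that $B_X\subseteq\overline{\mathrm{conv}}\,\Delta_\varepsilon$ for every $\varepsilon>0$. Suppose not; then a slice $S(f,\alpha)$ misses this closed convex hull. The hypothesis handles every slice with $f(x)\le0$, so necessarily $f(x)>0$. Two observations should then be useful: $\Delta_\varepsilon$ contains $x$, and it is approximately star-shaped about $x$, because segments $[x,y]$ with $y\in\Delta_\varepsilon$ lie in $\Delta_{\varepsilon'}$ for slightly larger $\varepsilon'$. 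Using these, I would try to tilt $f$ by a small multiple of $-g_x$ to a functional $f'$ with $f'(x)\le0$ whose slice is still disjoint from the convex hull, which would contradict the hypothesis.
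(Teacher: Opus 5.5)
\textbf{Gap: the propagation step is never proved.} Your closedness argument and the treatment of $\dim X=1$ are fine. The proof, however, rests on the propagation (openness) step, which you rightly call the real work, and you never carry it out. The auxiliary functional you propose also cannot carry it out.

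Your $g$ norms $x+y$ for a witness $y$ of $(x,f_0)$, so $g(x)\approx 1$. To force $F(x)\le 0$ you must subtract something, and then $\|F\|$ is only large if some point nearly norms $f$ while $g$ or $g_x$ is near $-1$ there. Nothing in your data (a witness for a nearby $f_0$) supplies such a point.

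Incidentally, the value $g(w)$ is not the obstruction. For a correction $+\lambda g$ with $\lambda>0$, the bound $g(w)\le1$ gives $f(w)\ge F(w)-\lambda$. This suffices as soon as $\|F\|\approx 1+\lambda$.

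The fallback is also incomplete. A \emph{small} multiple of $-g_x$ cannot make $f'(x)\le 0$ when $f(x)$ is not small. You also give no reason why the tilted slice would still miss $\overline{\mathrm{conv}}\,\Delta_\varepsilon$.

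\textbf{The missing idea.} Apply the hypothesis to $-f$ itself, which is admissible precisely because $f(x)>0$. Negating the resulting witness gives $y\in S_X$ with $f(y)>1-\delta$ and $\|y-x\|>2-\delta$. A functional $g\in S_{X^*}$ norming $y-x$ then satisfies $g(y)>1-\delta$ and $g(x)<-1+\delta$.

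This $g$ is negative at $x$ but nearly norming at a point where $f$ is nearly norming. That is exactly what reconciles your (a) with (b) and (c). The functional $F=f+2g$ satisfies $F(x)<-1+2\delta<0$ and $\|F\|\ge F(y)>3(1-\delta)$. A witness $w$ for $(x,F/\|F\|)$ then gives $f(w)=F(w)-2g(w)>3(1-\delta)^2-2>1-6\delta$, together with $\|x+w\|>2-\delta$.

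This argument works directly for every $f$ with $f(x)>0$. The set $B_x$, the connectedness of $S_{X^*}$, the uniform propagation radius and the separate $\dim X=1$ case all become unnecessary.
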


\begin{proof}
Clearly, every Daugavet point is a negative square point. For the converse,
suppose that $x$ is a negative square point. Fix $f\in S_{X^*}$ and
$0<\varepsilon<1$. We will verify~\eqref{eq:good}. There is nothing
to prove if $f(x)\le0$, so assume that $f(x)>0$ and put
$\delta=\varepsilon/8$.

Apply the negative square condition to $-f$ and negate the resulting
witness. This gives $y\in S_X$ with
\[
  f(y)>1-\delta \qquad \text{and} \qquad
  \|y-x\|>2-\delta.
\]
Choose $g\in S_{X^*}$ with $g(y-x)=\|y-x\|$. Then
$g(y)>1-\delta$ and $g(x)<-1+\delta$. For $F=f+2g$, we obtain
\[
  F(x)<-1+2\delta<0 \qquad \text{and} \qquad
  \|F\|\ge F(y)>3(1-\delta).
\]
Apply the negative square condition to $F/\|F\|$, obtaining
$w\in S_X$ with
\[
  \|x+w\|>2-\delta \qquad \text{and} \qquad
  F(w)>(1-\delta)\|F\|>3(1-\delta)^2.
\]
Consequently,
\[
  f(w)=F(w)-2g(w)>3(1-\delta)^2-2
       >1-6\delta>1-\varepsilon.
\]
Since also $\|x+w\|>2-\delta>2-\varepsilon$, the pair $(x,f)$ is good.
As $f$ was arbitrary, $x$ is a Daugavet point.
\end{proof}

Proposition~\ref{prop:kmm-signed} and
Theorem~\ref{thm:negative-point} together give an affirmative answer to the negative case of Question~A without a dimension restriction.

\begin{corollary}\label{cor:global}
A Banach space has the negative square Daugavet property if and
only if it has the Daugavet property.
\end{corollary}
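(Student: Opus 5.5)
The corollary follows by combining Proposition~\ref{prop:kmm-signed} with Theorem~\ref{thm:negative-point}, applied pointwise to every element of the unit sphere.

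For the nontrivial direction, assume $X$ has the negative square Daugavet property. By Proposition~\ref{prop:kmm-signed}, condition (iii) with $\sigma=-1$ holds. That is, $(x,f)$ is good for all $x\in S_X$ and $f\in S_{X^*}$ with $f(x)\le 0$. In the terminology of Definition~\ref{def:square-points}, every $x\in S_X$ is a negative square point. Theorem~\ref{thm:negative-point} then shows that every $x\in S_X$ is a Daugavet point. Hence every pair $(x,f)\in S_X\times S_{X^*}$ is good, and the characterization of \cite[Lemma~2.2]{KSSW2000} gives the Daugavet property.

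For the converse, assume $X$ has the Daugavet property. By the same characterization, every pair $(x,f)\in S_X\times S_{X^*}$ is good. In particular, this holds for the pairs with $f(x)\le 0$. So condition (iii) of Proposition~\ref{prop:kmm-signed} holds with $\sigma=-1$, and $X$ has the negative square Daugavet property. Alternatively, one can simply note that $T^2$ is rank-one or zero whenever $T$ is rank-one.

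There is no real obstacle in this argument. The only point worth noting is that, unlike Theorem~\ref{thm:positive-global}, no connectedness of $S_X$ is used, because Theorem~\ref{thm:negative-point} works at each point separately. So the assumption $\dim X>1$ is not needed. This is consistent with the one-dimensional case: for $T=-\Id$ on $\R$ we get $|1-1|=0\neq 2$, so $\R$ fails both properties.
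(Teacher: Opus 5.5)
Your proposal is correct and matches the paper's argument. Like the paper, you apply Proposition~\ref{prop:kmm-signed}(iii) with $\sigma=-1$ and then Theorem~\ref{thm:negative-point} at each point of $S_X$; the converse is trivial, and, as you note, no dimension restriction is needed.
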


\section{Comparison with diametral points}\label{sec:comparison}

Theorem~\ref{thm:negative-point} identifies negative square points
with Daugavet points. No such identification is available in the
positive case, and we now locate positive square points among the
diametral notions studied alongside Daugavet points. Abrahamsen,
Haller, Lima, and Pirk~\cite{AHLP2020} introduced $\Delta$-points
together with Daugavet points, and Haller, Langemets, Perreau, and
Veeorg~\cite{HLPV2024} later introduced $\nabla$-points. Requiring every point of the unit sphere to be a $\Delta$-point gives the \emph{diametral local diameter two property}~\cite{AHLP2020}. When $\dim X>1$, requiring every point of the unit sphere to be a $\nabla$-point gives the Daugavet property~\cite[Theorem~1.6]{HLPV2024}. For a recent survey of these notions and their interrelations, see~\cite{MPRZ2024}.

For $f\in S_{X^*}$ and $\alpha>0$, we write
\[
  S(f,\alpha)=\{y\in B_X:f(y)>1-\alpha\}
\]
for the corresponding slice of $B_X$. An equivalent formulation of the notion of a Daugavet point in terms of slices is that $x\in S_X$ is a Daugavet point if and only if $\sup_{y\in S}\|x-y\|=2$ for every slice $S$ of $B_X$~\cite{AHLP2020}.

\begin{definition}[{\cite{AHLP2020,HLPV2024}}]
\label{def:diametral-points}
Let $X$ be a Banach space and let $x\in S_X$. Then $x$ is a
\begin{itemize}
\item \emph{$\Delta$-point} if
$\sup_{y\in S}\|x-y\|=2$ for every slice $S$ of $B_X$ containing
$x$;
\item \emph{$\nabla$-point} if
$\sup_{y\in S}\|x-y\|=2$ for every slice $S$ of $B_X$ not
containing $x$.
\end{itemize}
\end{definition}

From this slice characterization, it is clear that $x\in S_X$ is a Daugavet point if and only if it is simultaneously a $\Delta$-point and a $\nabla$-point. It turns out that every $\nabla$-point is a positive square point, as we now show.

\begin{proposition}\label{prop:nabla-positive}
Let $X$ be a Banach space. If $x$ is a $\nabla$-point, then $x$ is a positive square point.
\end{proposition}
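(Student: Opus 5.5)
The plan is to apply the $\nabla$-point condition to the slice of $B_X$ determined by $-f$, and then reflect the resulting witness through the origin. Let $x$ be a $\nabla$-point, fix $f\in S_{X^*}$ with $f(x)\ge0$, and fix $0<\varepsilon<1$. We must produce $y\in S_X$ with $f(y)>1-\varepsilon$ and $\|x+y\|>2-\varepsilon$, as in~\eqref{eq:good}.

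First, put $\alpha=\varepsilon/4$ and consider the slice $S(-f,\alpha)$ of $B_X$. It does not contain $x$, since $-f(x)\le0<1-\alpha$. By Definition~\ref{def:diametral-points} there is $u\in S(-f,\alpha)$ with $\|x-u\|>2-\alpha$. Set $v=-u\in B_X$. Then
\[
  f(v)=-f(u)>1-\alpha
  \qquad\text{and}\qquad
  \|x+v\|=\|x-u\|>2-\alpha.
\]

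Second, we normalize $v$. From $\|x+v\|\le 1+\|v\|$ we get $\|v\|>1-\alpha$, so $y=v/\|v\|\in S_X$ is well defined. Since $\|v\|\le1$ and $f(v)>0$, we have $f(y)\ge f(v)>1-\alpha$. Moreover, $\|y-v\|=1-\|v\|<\alpha$, and hence
\[
  \|x+y\|\ge\|x+v\|-\|y-v\|>2-2\alpha.
\]
Both quantities therefore lie within $\varepsilon$ of their targets, so $(x,f)$ is good. As $f$ was an arbitrary functional with $f(x)\ge0$, this shows that $x$ is a positive square point.

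There is no real obstacle here. The only point needing care is that the witness supplied by the $\nabla$-condition lies in $B_X$ rather than on $S_X$. The normalization step handles this at the cost of a constant factor in $\varepsilon$.
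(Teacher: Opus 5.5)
Your proposal is correct and follows essentially the same route as the paper: apply the $\nabla$-condition to the slice $S(-f,\cdot)$, which excludes $x$ because $f(x)\ge0$, negate the witness, and then normalize it using $\|v\|>1-\alpha$. The only difference is cosmetic: you take $\alpha=\varepsilon/4$ where the paper takes $\delta=\varepsilon/2$, and either choice suffices.
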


\begin{proof}
Let $x$ be a $\nabla$-point and let $f\in S_{X^*}$ satisfy
$f(x)\ge0$. Fix $0<\varepsilon<1$ and put $\delta=\varepsilon/2$.
Since $(-f)(x)\le0<1-\delta$, the slice $S(-f,\delta)$ does not
contain $x$. There is therefore $z\in S(-f,\delta)$ with
$\|x-z\|>2-\delta$. Set $y=-z$. Then
\[
  f(y)>1-\delta
  \qquad\text{and}\qquad
  \|x+y\|>2-\delta.
\]
Therefore, we have $1-\delta<\|y\|\le1$.
For $w=y/\|y\|\in S_X$, it follows that
\[
  f(w)\ge f(y)>1-\delta>1-\varepsilon
\]
and
\[
  \|x+w\|\ge\|x+y\|-\|w-y\|
  >2-\delta-(1-\|y\|)>2-\varepsilon.
\]
Thus $(x,f)$ is good, and $x$ is a positive square point.
\end{proof}

\begin{remark}\label{rem:nabla-daugavet}
Proposition~\ref{prop:nabla-positive} and
Theorem~\ref{thm:positive-global} give a new proof of the result
of~\cite[Theorem~1.6]{HLPV2024} mentioned above. Indeed, let $X$ be a Banach
space with $\dim X>1$ in which every point of $S_X$ is a
$\nabla$-point. By Proposition~\ref{prop:nabla-positive}, every
point of $S_X$ is a positive square point, so $X$ has the positive
square Daugavet property by Proposition~\ref{prop:kmm-signed}.
Theorem~\ref{thm:positive-global} now shows that $X$ has the
Daugavet property.
\end{remark}

Hence we have the following chain of implications for these notions.
\[
\begin{gathered}
 \Delta\text{-point}\overset{\text{(a)}}{\Longleftarrow} \text{Daugavet point}
  \ \overset{\text{(b)}}{\Longrightarrow}\ \nabla\text{-point}
  \overset{\text{(c)}}{\Longrightarrow}\ \text{positive square point}.
\end{gathered}
\]
The implications (a) and (b) are strict, as shown in~\cite[Example~4.7]{AHLP2020} and~\cite[Example~1.5]{HLPV2024}, respectively. The next two examples show that implication (c) is also strict and that the positive square point and $\Delta$-point notions are incomparable.

\begin{example}\label{ex:linfty}
In $X=\ell_\infty^2$, the point $x=(1,0)$ is a positive square
point but is neither a $\nabla$-point nor a $\Delta$-point.
\end{example}

\begin{proof}
Let $f=(a,b)\in S_{\ell_1^2}$ with $f(x)=a\ge0$. Choose
$y=(1,\operatorname{sgn}b)$, taking either sign when $b=0$.
Then $f(y)=a+|b|=1$ and $\|x+y\|_\infty=2$, so $x$ is a
positive square point.

For $g=(1/2,1/2)$, the slice $S(g,1/4)$ excludes $x$.
If $z=(s,t)$ belongs to this slice, then $s+t>3/2$, so
$s,t>1/2$. Therefore
\[
  \|x-z\|_\infty=\max\{|1-s|,|t|\}\le1.
\]
Thus $x$ is not a $\nabla$-point.

For $h=(1,0)$, the slice $S(h,1/2)$ contains $x$, and every point of this slice also has distance
at most one from $x$. Hence $x$ is not a $\Delta$-point.
\end{proof}

\begin{example}\label{ex:delta-not-positive-square}
Let $c$ be the space of convergent sequences with the supremum
norm. In $X=c\oplus_2\R$, the point $x=(\mathbf1,0)$ is a
$\Delta$-point but is not a positive square point.
\end{example}

\begin{proof}
Let $e_n$ be the coordinate vectors in $c$. The vectors
\[
  y_n=(\mathbf1-2e_n,0)\in S_X
\]
converge weakly to $x$ and satisfy $\|x-y_n\|=2$. Since slices are relatively weakly open, every slice containing $x$ contains $y_n$ for all sufficiently large
$n$, proving that $x$ is a $\Delta$-point.

The functional $f(v,t)=t$ has norm one and $f(x)=0\ge0$, so $f$ is
an admissible test functional for the positive square point
condition. We show that the pair $(x,f)$ is not good for
$\varepsilon=1/8$: no $y=(v,t)\in S_X$ satisfies both
$f(y)>1-\varepsilon$ and $\|x+y\|>2-\varepsilon$.

Suppose $y=(v,t)\in S_X$ satisfies $t=f(y)>1-\varepsilon=7/8$. Since
$\|v\|_\infty^2+t^2=1$,
\[
  \|v\|_\infty^2=1-t^2=(1-t)(1+t)\le2(1-t)<2\varepsilon=\tfrac14,
\]
so $\|v\|_\infty<\tfrac12$. Hence
\[
  \|x+y\|^2=\|\mathbf1+v\|_\infty^2+t^2
  \le(1+\|v\|_\infty)^2+t^2=2+2\|v\|_\infty<3,
\]
so $\|x+y\|<\sqrt{3}<\tfrac{15}8=2-\varepsilon$.

Thus $(x,f)$ is not good, and $x$ is not a positive square point.
\end{proof}

\section*{AI disclosure statement}
OpenAI ChatGPT, accessed through the ChatGPT for Academic Researchers program, was used solely for literature search and to improve the clarity, grammar, and style of the manuscript. All suggested changes were reviewed and approved by the author.

\end{document}